\newcommand{\card}{\mathrm{card}}
\newcommand{\zero}{\mathbf{0}}
\newcommand{\R}{\mathbb{R}}
\newcommand{\co}{\mathrm{co}}
\newcommand{\A}{\mathbf{A}}
\newcommand{\x}{\mathbf{x}} %Vector
\newcommand{\y}{\mathbf{y}} %Vector
\newcommand{\z}{\mathbf{z}} %Vector
\newtheorem{theorem}{Theorem}[section]
\newtheorem{lemma}{Lemma}[section]
\newtheorem{definition}{Definition}[section]
\begin{document}

  \title{A generalisation of  de la Vall\'{e}e-Poussin procedure to multivariate approximations}

 \author{
%
%\institute{
Nadezda Sukhorukova, \\
Swinburne University of Technology, John St, Hawthorn VIC 3122,\\
Australia and Federation University Australia,\\ 
Postal address: PO Box 663. Ballarat VIC 3353\\ 
 %\email
 {nsukhorukova@swin.edu.au}
\and Julien Ugon, Federation University Australia,\\ 
Postal address: PO Box 663. Ballarat VIC 3353\\
 %\email
 {j.ugon@federation.edu.au}}

%
% Use the package "url.sty" to avoid
% problems with special characters
% used in your e-mail or web address
%
\maketitle

\abstract{The theory of Chebyshev approximation has been extensively studied. In most cases, the optimality conditions are based on the notion of alternance or alternating sequence (that is, maximal deviation points with alternating deviation signs). There are a number of approximation methods for polynomial and polynomial spline approximation. Some of them are based on  the classical de la  Vall\'{e}e-Poussin procedure. In this paper  we demonstrate that under certain assumptions the classical de la Vall\'{e}e-Poussin procedure, developed for univariate polynomial approximation, can be extended to the case of multivariate approximation.  The corresponding basis functions are not restricted to be monomials.
}%\newline\indent

{\bf Keywords:} {Multivariate polynomial, Chebyshev approximation, de la  Vall\'{e}e-Poussin procedure}

{\bf Subclass:} {41A10 \and  41A50 \and 41N10}

\section{Introduction}\label{sec:introduction}

The theory of Chebyshev approximation for univariate functions was developed in the late nineteenth (Chebyshev) and twentieth century (just to name a few \cite{nurnberger,rice67,Schumaker68}). Many papers are dedicated to polynomial and polynomial spline approximations, however, other types of  functions (for example, trigonometric polynomials) have also been used. In most cases, the optimality conditions are based on the notion of alternance (that is, maximal deviation points with alternating deviation signs). 

There have been several attempts to extend this theory to the case of multivariate functions. One of them is \cite{rice63}. The main obstacle in extending these results to the case of multivariate functions is that it is not very easy to extend the notion of monotonicity to the case of several variables.

The main contribution of this paper is the extention of  the classical de la Vall\'{e}e-Poussin procedure (originally developed for univariate polynomial approximation \cite{valleepoussin:1911}) to the case of multivariate approximation under certain assumptions. The corresponding basis functions are not restricted to be monomials (that is, non-polynomial approximation). 

The paper is organised as follows. In section~\ref{sec:convexObjective} we demonstrate that the corresponding optimisation problems are convex. Then, in section~\ref{sec:VPprocedure} we extend the classical de la Vall\'{e}e-Poussin procedure to the case of multivariate approximation. Finally, section~\ref{sec:conclusion} highlights our future research directions.

\section{Convexity of the objective function}\label{sec:convexObjective}
 Let us now formulate the objective function. Suppose that a continuous function $f(\x)$ is to be approximated by a function
 \begin{equation}\label{eq:model_function}
 L(\A,\x)=a_0+\sum_{i=1}^{n}a_ig_i(\x),
 \end{equation}
 where $L(\A,\x)$ is a modelling function, $g_i(\x),~i=1,\dots,n$ are the basis functions and the multipliers $\A = (a_0,a_1,\dots,a_n)$ are the corresponding coefficients. In the case of polynomial approximation, basis functions are monomials. In this paper, however, we do not restrict ourselves to polynomials. At a point \(\x\) the deviation between the function \(f\) (also referred as approximation function) and the approximation is:
 \begin{equation}
   d(\A,\x) = |f(\x) - L(\A,\x)|.
\end{equation}
   \label{eq:deviation}
 Then we can define the uniform approximation error over the set \(Q\) by
 \begin{equation}
   \label{eq:uniformdeviation}
\Psi(\A)=\sup_{\x\in Q} \max\{f(\x)-a_0-\sum_{i=1}^{n}a_ig_i(\x),a_0+\sum_{i=1}^{n}a_ig_i(\x)-f(\x)\}.
\end{equation}
 The approximation problem is
 \begin{equation}\label{eq:obj_fun_con}
   \mathrm{minimise~}\Psi(\A) \mathrm{~subject~to~} \A\in 
   \R^{n+1}.
 \end{equation}

 Since the function \(L(\A,\x)\) is linear in \(\A\), the approximation error function \(\Psi(\A)\), as the supremum of affine functions, is convex. Furthermore, its subdifferential at a point~\(\A\) is trivially obtained using the gradients of the active affine functions in the supremum (see \cite{Zalinescu2002} for details):
 \begin{equation}
   \label{eq:subdifferentialObjective}
   \partial \Psi(\A) = \co\left\{ \begin{pmatrix}
1\\
g_1(\x)\\
g_2(\x)\\
\vdots \\
g_n(\x)
\end{pmatrix}: \x \in E_+(\A),-\begin{pmatrix}
1\\
g_1(\x)\\
g_2(\x)\\
\vdots \\
g_n(\x)
\end{pmatrix}: \x\in E_-(\A)\right\},
\end{equation}
 where \(E_+(\A)\) and \(E_-(\A)\) are respectively the points of maximal positive and negative deviation (extreme points):
\begin{align*}
   E^+(\A) &= \Big\{\x\in Q:  f(\x)-L(\A,\x) = \max_{\y\in Q} d(A,\y)\Big\},\\
   E_- (\A)&= \Big\{\x\in Q: -f(\x)+ L(\A,\x) = \max_{\y\in Q} d(\A,\y)\Big\}.
 \end{align*}
Note that in the case of multivariate polynomial approximation, $g_i(\x)$, $i=1,\dots,n$ are monomials.

 Define by \(G^+\) and \(G^-\) the sets
 \begin{align*}
   G^+(\A) &= \Big\{(1,g_1(\x),\dots,g_n(\x))^T: \x\in E^+(\A)\Big\}\\
   G^-(\A) &= \Big\{(1,g_1(\x),\dots,g_n(\x))^T: \x\in E^-(\A)\Big\}
 \end{align*}
Assume that \(\card(E_+) + \card(E_-) = n+2\). 

The following theorem holds. We present the proof for completeness. 

\begin{theorem}\label{thm:main}(\cite{matrix})
$\A^*$ is an optimal solution to problem~(\ref{eq:obj_fun_con}) if and only if the convex hulls of the sets \(G^+(\A^*)\) and \(G^-(\A^*)\) intersect.
\end{theorem}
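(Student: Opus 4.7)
The plan is to apply the standard convex-analytic optimality condition: since $\Psi$ is convex (as established just above the statement), $\A^*$ is a global minimiser of $\Psi$ over $\R^{n+1}$ if and only if $\zero\in\partial\Psi(\A^*)$. So the whole proof reduces to rephrasing the condition $\zero\in\partial\Psi(\A^*)$, using the explicit description of $\partial\Psi(\A^*)$ given in~(\ref{eq:subdifferentialObjective}), as the geometric statement that $\co\, G^+(\A^*)\cap\co\, G^-(\A^*)\neq\emptyset$.

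First I would unfold the definition. Writing $v(\x)=(1,g_1(\x),\dots,g_n(\x))^T$, the inclusion $\zero\in\partial\Psi(\A^*)$ means there exist finitely many points $\x_i^+\in E^+(\A^*)$ and $\x_j^-\in E^-(\A^*)$, together with nonnegative weights $\lambda_i^+$ and $\lambda_j^-$ with $\sum_i\lambda_i^+ + \sum_j\lambda_j^- = 1$, such that
\begin{equation*}
\sum_i \lambda_i^+\, v(\x_i^+) \;-\; \sum_j \lambda_j^-\, v(\x_j^-) \;=\; \zero.
\end{equation*}
Next, I would exploit the fact that the first coordinate of every $v(\x)$ equals $1$. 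Reading off the first component of the equality above yields $\sum_i\lambda_i^+ = \sum_j\lambda_j^-$, and combined with the constraint that the weights sum to $1$, this forces $\sum_i\lambda_i^+ = \sum_j\lambda_j^- = \tfrac{1}{2}$. Rescaling by a factor of $2$ on each side then produces genuine convex combinations, and the displayed equation becomes
\begin{equation*}
\sum_i (2\lambda_i^+)\, v(\x_i^+) \;=\; \sum_j (2\lambda_j^-)\, v(\x_j^-),
\end{equation*}
which exhibits a common point in $\co\, G^+(\A^*)\cap\co\, G^-(\A^*)$.

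For the converse direction I would simply reverse the argument: given a point in the intersection, written as $\sum_i\mu_i^+ v(\x_i^+) = \sum_j\mu_j^- v(\x_j^-)$ with $\sum_i\mu_i^+=\sum_j\mu_j^-=1$, divide all weights by $2$ to obtain nonnegative coefficients summing to $1$ that express $\zero$ as an element of the convex hull~(\ref{eq:subdifferentialObjective}), hence $\zero\in\partial\Psi(\A^*)$ and $\A^*$ is optimal. The assumption $\card(E^+)+\card(E^-)=n+2$ is not actually required for the equivalence itself; by Carath\'eodory's theorem in $\R^{n+1}$ one can always realise the intersection with at most $n+2$ generators, so the hypothesis is naturally consistent but not logically needed for the iff. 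The argument is essentially bookkeeping; the only subtle point to be careful about is the rescaling step and the use of the first coordinate to split the total mass equally between the positive and negative parts.
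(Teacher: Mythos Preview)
Your proposal is correct and follows essentially the same approach as the paper: both reduce optimality to $\zero\in\partial\Psi(\A^*)$, use the first coordinate of $v(\x)$ to force the total positive and negative masses each to equal $\tfrac12$, and then rescale to exhibit a common point of $\co G^+(\A^*)$ and $\co G^-(\A^*)$. The paper additionally routes the argument through Carath\'eodory's theorem to fix the number of generators at $n+2$, but---as you correctly observe---this is not needed for the equivalence itself.
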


\begin{proof} 
%{\bf Proof:} 
The vector \(\A^*\) is an optimal solution to the convex problem \eqref{eq:obj_fun_con}  if and only if
\[
  \zero_{n+1} \in \partial \Psi(\A^*),
\]
where $\Psi$ is defined in \eqref{eq:uniformdeviation}.
Note that due to Carath\'eodory's theorem, $\zero_{n+1}$ can be constructed as a convex combination of a finite number of points (one more than the dimension of the corresponding space). Since the dimension of the corresponding space is $n+1$, it can be done using at most $n+2$ points.

Assume that in this collection of $n+2$ points $k$ points ($h_i,~i=1,\dots,k$) are from~$G^+(\A^*)$ and $n+2-k$ ($h_i,~i=k+1,\dots,n+2$) points are from $G^-(\A^*)$. Note that $0<k<n+2$, since the first coordinate is either~1 or $-1$ and therefore $\zero_{n+1}$ can only be formed by using both sets ($G^+(\A^*)$ and $-G^-(\A^*)$). Then
$$\zero_{n+1}=\sum_{i=1}^{n+2}\alpha_ih_i,~0\leq\alpha\leq 1.$$
Let $0<\gamma=\sum_{i=1}^{k}\alpha_i$, then
$$\zero_{n+1}=\sum_{i=1}^{n+2}\alpha_ih_i=\gamma\sum_{i=1}^{k}\frac{\alpha_i}{\gamma}h_i+(1-\gamma)\sum_{i=k+1}^{n+2}\frac{\alpha_i}{1-\gamma}h_i=\gamma h^+ +(1-\gamma)h^-,$$
where $h^+\in G^+(\A^*)$ and $h^-\in -G^-(\A^*)$. Therefore, it is enough to demonstrate that $\zero_{n+1}$ is a convex combination of two vectors, one from $G^+(\A^*)$ and one from $-G^-(\A^*)$.

By the formulation of the subdifferential of \(\Psi\) given by \eqref{eq:subdifferentialObjective}, there exists a nonnegative  number \(\gamma \leq 1\) and two vectors
\[
  g^+ \in \co\left\{ \begin{pmatrix}
1\\
g_1(\x)\\
g_2(\x)\\
\vdots \\
g_n(\x)
\end{pmatrix}: \x \in E^+(\A^*)\right\}, \]

and
\[
  g^- \in \co\left\{ \begin{pmatrix}
1\\
g_1(\x)\\
g_2(\x)\\
\vdots \\
g_n(\x)
\end{pmatrix}: \x \in E^-(\A^*)\right\}
\]
such that \(\zero = \gamma g^+ - (1-\gamma) g^-\). Noticing that the first coordinates \(g^+_1 = g^-_1 = 1\), we see that \(\gamma = \frac{1}{2}\). This means that \(g^+ - g^- = 0\). This happens if and only if
 \begin{equation}\label{eq:opt_main2}
 \co\left\{
\left(
\begin{matrix}
1\\
g_1(\x)\\
g_2(\x)\\
\vdots
\\
g_n(\x)\\
\end{matrix}
\right): \x \in E^+(\A^*)
 \right
  \}\cap
  \co\left\{
\left(
\begin{matrix}
1\\
g_1(\x)\\
g_2(\x)\\
\vdots
\\
g_n(\x)\\
\end{matrix}
\right): \x \in E^-(\A^*)
 \right \}\ne\emptyset.
 \end{equation}
 As noted before, the first coordinates of all these vectors are the same, and therefore the theorem is true, since if $\gamma$ exceeds one, the solution where all the components are divided by $\gamma$ can be taken as the corresponding coefficients in the convex combination.
 \end{proof}
%\hskip300pt $\square$

\section{de la Vall\'{e}e-Poussin procedure for nonsingular basis}\label{sec:VPprocedure}
\subsection{Definitions and existing results}

We start with necessary definitions from convex analysis.

\begin{definition}
The relative interior of a set $S$ (denoted by $\textrm{relint} (S)$) is defined as its interior within the affine hull of $S$. 
That is,
$$
\textrm{relint}(S)= \{\textbf{x} \in S : \exists \varepsilon>0, B_\varepsilon(x)\cap \textrm{aff}(S)\subseteq S\},$$
where $B_\varepsilon(x)$ is a ball of radius $\varepsilon$ centred in $x$ and   $\textrm{aff}(S)$ is the affine hull of $S$. 
\end{definition}

A useful property of relative interiors of convex hulls of finite number of points is formulated in the following lemma. 
\begin{lemma}
Any relative interior point of a convex combination of a finite number of points can be presented as a convex combination of all these points with strictly positive convex combination coefficients and vice versa. 
\end{lemma}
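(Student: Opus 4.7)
The plan is to fix a finite set $S=\{p_1,\dots,p_m\}\subset\R^d$, let $C=\co(S)$, and prove each direction of the equivalence by a direct manipulation of convex combinations, relying on the standard characterization of relative interior: $x\in\mathrm{relint}(C)$ if and only if for every $y\in C$ there exists $t>0$ such that $x+t(x-y)\in C$ (the segment from $y$ through $x$ can be prolonged inside $C$).

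For the direction stating that strict positivity of the coefficients implies relative interior, I would fix $x=\sum_{i=1}^m \alpha_i p_i$ with $\alpha_i>0$ and $\sum\alpha_i=1$, take an arbitrary $y=\sum_{i=1}^m\beta_i p_i\in C$, and compute
\begin{equation*}
x+t(x-y)=\sum_{i=1}^m\bigl((1+t)\alpha_i-t\beta_i\bigr)p_i.
\end{equation*}
The coefficients sum to $1$, and because each $\alpha_i$ is strictly positive they remain nonnegative for all sufficiently small $t>0$. Hence $x+t(x-y)\in C$, and the characterization of relative interior yields $x\in\mathrm{relint}(C)$.

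For the reverse direction I would use the barycenter $b=\frac{1}{m}\sum_{i=1}^m p_i$, which by the first direction already lies in $\mathrm{relint}(C)$. Given $x\in\mathrm{relint}(C)$, the characterization applied with $y=b$ produces $\mu>0$ such that $x+\mu(x-b)\in C$. Write $(1+\mu)x-\mu b=\sum_{i=1}^m\beta_i p_i$ with $\beta_i\ge 0$ and $\sum\beta_i=1$. Solving for $x$ gives
\begin{equation*}
x=\frac{\mu}{1+\mu}\,b+\frac{1}{1+\mu}\sum_{i=1}^m\beta_i p_i=\sum_{i=1}^m\left(\frac{\mu}{m(1+\mu)}+\frac{\beta_i}{1+\mu}\right)p_i,
\end{equation*}
which is a convex combination of all the $p_i$ in which every coefficient is at least $\frac{\mu}{m(1+\mu)}>0$.

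The main obstacle is really just the first direction, since it pins down why a convex combination with strictly positive weights cannot lie on a proper face of $C$; the argument above bypasses any explicit face analysis by perturbing toward an arbitrary point of $C$ and exploiting the slack provided by positivity of the $\alpha_i$. Everything after that is bookkeeping: invoking the barycenter as a known relative interior point and rearranging the resulting equation to expose strict positivity of each weight.
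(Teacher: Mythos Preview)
Your proof is correct. Both directions are handled cleanly: the prolongation characterization of relative interior (Rockafellar, Theorem~6.4) is exactly the right tool, the computation $(1+t)\alpha_i-t\beta_i\ge 0$ for small $t>0$ is valid because each $\alpha_i>0$, and the barycenter trick for the converse is standard and correctly executed.

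There is nothing to compare against: the paper states this lemma as a known fact from convex analysis and gives no proof of its own. Your argument supplies precisely the kind of self-contained verification one would expect for such a folklore result.
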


In univariate case polynomial approximation, basis is an arbitrary collection of $n+2$ points, where $n$ is the number of monomials. What do we call basis in multivariate case? Based on necessary and sufficient optimality conditions (Theorem~\ref{thm:main}) the convex hulls built over positive and negative maximal deviation points should intersect. Is it always possible to partition $n+2$ points in to two subsets in such a way that the corresponding convex hulls are intersecting. The answer to this question is ``yes'', if $n\geq d$.  The following theorem holds.  
\begin{theorem}(Radon \cite{Radon1921})
Any set of $d+2$ points in $\R^d$ can be partitioned into two disjoint sets whose convex hulls intersect. 
\end{theorem}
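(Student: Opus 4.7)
The plan is to use the standard linear-algebra lifting argument. Given the $d+2$ points $x_1,\dots,x_{d+2}\in\R^d$, I would first pass to $\R^{d+1}$ by appending a $1$ to each point, obtaining vectors $\tilde x_i=(x_i,1)^T$. Since $d+2$ vectors in a $(d+1)$-dimensional space cannot be linearly independent, there exist scalars $\lambda_1,\dots,\lambda_{d+2}$, not all zero, with $\sum_{i=1}^{d+2}\lambda_i\tilde x_i=\zero$. Reading off the first $d$ coordinates and the last coordinate separately, this is equivalent to the system
$$\sum_{i=1}^{d+2}\lambda_i x_i = \zero \quad\text{and}\quad \sum_{i=1}^{d+2}\lambda_i = 0.$$

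Next I would exploit the sign structure of the $\lambda_i$. Because they are not all zero but their sum vanishes, the sets $I^+=\{i:\lambda_i>0\}$ and $I^-=\{i:\lambda_i<0\}$ are both non-empty. Setting $\Lambda=\sum_{i\in I^+}\lambda_i=-\sum_{i\in I^-}\lambda_i>0$ and dividing the first equation by $\Lambda$ gives
$$\sum_{i\in I^+}\frac{\lambda_i}{\Lambda}\, x_i \;=\; \sum_{i\in I^-}\frac{-\lambda_i}{\Lambda}\, x_i.$$
On each side the coefficients are strictly positive and sum to $1$, so each side is a convex combination; hence the common value lies in $\co\{x_i:i\in I^+\}\cap \co\{x_i:i\in I^-\}$.

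Finally, to get a genuine partition of the whole index set $\{1,\dots,d+2\}$, any remaining indices with $\lambda_i=0$ can be appended arbitrarily to either $I^+$ or $I^-$; the intersection of the convex hulls is only enlarged by this, so the conclusion is preserved.

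I do not expect any step to be a real obstacle; this is essentially a one-line consequence of the lifting trick. The only points requiring mild care are verifying that both sign classes are non-empty (which follows from the dependence being non-trivial together with the sum-to-zero condition) and handling the zero-coefficient indices so that the output is a partition rather than a mere pair of disjoint subsets.
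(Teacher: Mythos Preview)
Your argument is the standard and correct proof of Radon's theorem. Note, however, that the paper does not actually supply its own proof of this statement: the theorem is merely quoted with a citation to Radon's original 1921 paper, so there is nothing in the paper to compare your proof against.
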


\begin{definition}
A point in the intersection of these convex hulls is called a Radon point of the set.
\end{definition}
In the rest of the paper we assume that  $n\geq d$.
%Therefore, the desired definition of basis is as follows. 
%\begin{definition}
%Any set of $n+2$ points in $\R^d$ is called {\em basis}.
%\end{definition}

It will be demonstrated that it is not possible to extend  de la Vall\'{e}e-Poussin procedure to multivariate approximations without imposing additional assumptions (non-singular basis). It may be possible that some (or all) of these assumptions can be removed if we restrict ourselves to a particular class of basis functions (for example, monomials). This research direction is out of scope of this paper.

\begin{definition}
Consider a set \(\mathcal{S}\) of \(n+2\) points partitioned into two sets, the sets \(\mathcal{Y}\) of points with positive deviation and \(\mathcal{Z}\) of points with negative deviation. These points are said to form a \emph{basis} if the convex hulls of \(\mathcal{Y}\) and \(\mathcal{Z}\) intersect. Furthermore, if the relative interiors of the convex hulls intersect and any $(n+1)$ point subset of this basis form an affine independent system then the basis is said to be \emph{non-singular}.
\end{definition} 

\subsection{de la Vall\'{e}e-Poussin procedure for multivariate approximations}
\subsubsection{Classical univariate procedure}
The classical univariate de la Vall\'{e}e-Poussin procedure contains three steps.
\begin{enumerate}
\item For any basis ($n+2$ points) there exists a unique polynomial, such that the absolute deviation at the basis points is the same and the deviation sign is alternating. This polynomial is also called Chebyshev interpolation polynomial. 
\item If there is a point (outside of the current basis), such that the absolute deviation at this point is higher than at the basis points then this point can be included in the basis by removing one of the current basis points and the deviation signs are deviating.  
\item The absolute deviation of the new Chebyshev interpolating polynomial is at least as high as the absolute deviation for the original basis.   
\end{enumerate}
In the rest of this section we extend the procedure for a non-singular basis. 
\subsubsection{Step one extension}

We start with constructing Chebyshev interpolation polynomials. The following theorem holds.
\begin{theorem}
Assume that a system of points $\mathbf{y}_i,~i=1,\dots,N_+$ and $\mathbf{z}_i,~i=1,\dots,N_-$ forms a non-singular basis. Then there exists a unique polynomial deviating from $f$ at the points $\mathbf{y}_i,~i=1,\dots,N_+$ and $\mathbf{z}_i,~i=1,\dots,N_-$ by the same value and the deviation signs are opposite for $\mathbf{y}_i$ and $\mathbf{z}_i$. 
\end{theorem}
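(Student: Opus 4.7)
The plan is to recast the interpolation requirement as a square linear system and show that the assumptions on the non-singular basis exactly make its coefficient matrix invertible. Writing $N_+ + N_- = n+2$, I introduce $h$ as an extra unknown alongside $(a_0,\dots,a_n)$, and impose
\[
a_0 + \sum_{k=1}^n a_k g_k(\mathbf{y}_i) + h = f(\mathbf{y}_i),\qquad a_0 + \sum_{k=1}^n a_k g_k(\mathbf{z}_j) - h = f(\mathbf{z}_j),
\]
one equation per basis point. This is an $(n+2)\times(n+2)$ linear system in the unknowns $(\A,h)=(a_0,\dots,a_n,h)$, so existence and uniqueness of the Chebyshev interpolant are equivalent to non-singularity of the coefficient matrix $M$ whose rows are $(1,g_1(\x),\dots,g_n(\x),+1)$ for $\x\in\mathcal{Y}$ and $(1,g_1(\x),\dots,g_n(\x),-1)$ for $\x\in\mathcal{Z}$.

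To prove $M$ is non-singular, I would assume there exists a nontrivial null vector $(\A,h)$, i.e.\ $v^T\A + h = 0$ for every augmented vector $v=(1,g_1(\x),\dots,g_n(\x))^T$ with $\x\in\mathcal{Y}$ and $v^T\A - h = 0$ for every such $v$ with $\x\in\mathcal{Z}$, and rule this out in two cases. If $h=0$, then $\A$ is orthogonal to all $n+2$ augmented basis vectors; the affine independence of any $(n+1)$-subset built into the definition of a non-singular basis makes these vectors span $\R^{n+1}$, forcing $\A=\zero$, a contradiction. If $h\neq 0$, then the linear functional $y\mapsto y^T\A$ takes the constant value $-h$ on all augmented $\mathcal{Y}$-vectors and $+h$ on all augmented $\mathcal{Z}$-vectors; the two sets therefore lie in distinct parallel hyperplanes and are strictly separated, so their convex hulls cannot meet, contradicting the fact that the points form a basis (and a fortiori the relative interior intersection needed for non-singularity).

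Having eliminated both cases, $M$ is invertible, so the system has a unique solution $(\A^*,h^*)$, giving the unique interpolant $L(\A^*,\cdot)$ with equal-magnitude, sign-alternating deviations at the basis points as claimed.

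I expect the only real subtlety to be the translation between the \emph{affine} independence of the $n+1$ image points $(g_1(\x),\dots,g_n(\x))\in\R^n$ and the \emph{linear} independence of the augmented vectors in $\R^{n+1}$, which is what actually powers Case~1; Case~2 is then a short strict-separation argument, and modulo these observations the argument reduces to standard linear algebra.
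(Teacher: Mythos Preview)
Your argument is correct, and it takes a genuinely different route from the paper. The paper sets up the same $(n+2)\times(n+2)$ system with matrix $M$, but then proves $\det M\neq 0$ computationally: it uses the strictly positive convex coefficients coming from the \emph{relative interior} intersection to perform row operations, obtains a relation between $\det M$ and its cofactors along the last column, and concludes that in the Laplace expansion along that column all terms carry the same sign. Your null-space argument is more conceptual and cleanly separates the two ingredients of the non-singular basis hypothesis: affine independence of an $(n+1)$-subset handles the case $h=0$ (via the standard equivalence you flag between affine independence in $\R^n$ and linear independence of the augmented vectors in $\R^{n+1}$), while the mere intersection of the convex hulls handles $h\neq 0$ by strict separation. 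A pleasant by-product is that your proof never invokes the \emph{relative interior} intersection at all---ordinary intersection of the convex hulls plus affine independence of a single $(n+1)$-subset already suffices---so you have in fact established the result under a hypothesis strictly weaker than the paper's ``non-singular basis''. The paper's determinant approach, on the other hand, yields as a side effect explicit sign information about the cofactors (tied to the convex coefficients), which is the sort of thing one might want later when analysing how the interpolant changes under a point exchange.
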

\begin{proof}
Consider the following linear system:
\begin{equation}\label{eq:main_system}
\left(
\begin{tabular}{ccc}
1&{$g(\mathbf{y}_1)$}&1\\
1&{$g(\mathbf{y}_2)$}&1\\
\vdots & \vdots & \vdots\\
1&{$g(\mathbf{y}_{N_+})$}&1\\
1&{$g(\mathbf{z}_1)$}&-1\\
1&{$g(\mathbf{z}_2)$}&-1\\ 
\vdots & \vdots & \vdots\\
1&{$g(\mathbf{z}_{N_-})$}&-1\\
\end{tabular}
\right)\left(
\begin{tabular}{c}
$\mathbf{A}$\\
$\sigma$\\
\end{tabular}
\right)
=\left(\
\begin{tabular}{c}
$f(\mathbf{y}_1)$\\
$f(\mathbf{y}_2)$\\
\vdots\\
$f(\mathbf{y}_{N_+})$\\
$f(\mathbf{z}_1)$\\
$f(\mathbf{z}_2)$\\
\vdots\\
$f(\mathbf{z}_{N_-})$\\
\end{tabular}
\right),
\end{equation}
where $\A$ represents the parameters of the polynomial, while $\sigma$ is the deviation. If $\sigma=0$, there exists a polynomial passing through the chosen points (interpolation).   
Denote the system matrix in~(\ref{eq:main_system}) by $M$. Since the basis is non-singular, that is, the  relative interiors of sets ${\cal{Y}}$ and ${\cal{Z}}$ are intersecting, there exist two sets of strictly positive coefficients $$\alpha_1,\dots,\alpha_{N_+}:~ \sum_{i=1}^{N_+}\alpha_i=1$$
and
$$\beta_1,\dots,\alpha_{N_-}:~ \sum_{i=1}^{N_-}\beta_i=1,$$
such that 
\begin{equation}\label{eq:intersecting}
\sum_{i=1}^{N_+}\alpha_ig(\mathbf{y}_i)=\sum_{i=1}^{N_-}\beta_i g(\mathbf{z}_i).
\end{equation}  
Multiply the first row of $M$ by the convex coefficient $\alpha_1$ from~(\ref{eq:intersecting}). For each remaining row of $M$ one can apply the following update: 
\begin{itemize}
\item  multiply by the corresponding convex coefficient and add all the rows that correspond to the vertices with the same deviation sign as the first row;
\item multiply by the corresponding convex coefficient and subtract all the rows that correspond to the vertices with the deviation sign opposite to the sign of the first row. 
\end{itemize}
Then
\begin{equation}\label{eq:det_tilde_M}
\alpha_l\det(\tilde{M})=2(-1)^{l+2+i}\det(M^+_l),\;~l=1,\dots, N_{+}, 
\end{equation}
where $M^+_i$ is obtained from~$\tilde{M}$ by removing the last column and the $i-$th row and $M^-_j$ is obtained from~$\tilde{M}$ by removing the last column and the $(N_{+}+j)$-th row. Also note that
\begin{equation}
\det(M^+_i)=2(-1)^{l+2+N_{+}+j+1}\det(M^-_j),~l=1,\dots, N_{+}.
\end{equation}

If now we evaluate the the determinant of~$M$ directly, then 
\begin{equation}\label{eq:directly_det_M}
\det M=\sum_{i=1}^{N_+}(-1)^{l+2+i}\Delta_i+\sum_{j=N_{+}+1}^{N_{+}+N_{-}}(-1)^{l+2+j+1}\Delta_j.
\end{equation}
Based of~(\ref{eq:det_tilde_M}), each component in the right hand side of~(\ref{eq:directly_det_M}) has the same sign. 
%Moreover, since none of the vertices can be removed without disconnecting the sets, the determinant of~$M$ is not zero.
Therefore, the linear system~(\ref{eq:main_system}) has a unique solution for any right hand side of the system.
\end{proof}

%\begin{theorem}
%For any basis, 
%\end{theorem}
%
%\begin{theorem}
%Basis is non-singular if and only if  any subset of $n+1$ points of this basis forms an  affine independent system. 
%\end{theorem}
%\begin{proof}
%Assume that each system of $n+1$ points is affine independent. Therefore, the first $n+1$ components of each row can be presented uniquely as a linear combination of the first $n+1$ components of the remaining rows. 
%
%
%One direction (non-singular basis implies that each subsystem of $n+1$ points forms an affine independent system) immediately follows from (\ref{eq:det_tilde_M}).
%
%  
%\end{proof}

%Similar to the univariate case, one can define the notion of basis for multivariate polynomials, constructed as linear combinations of $n$ monomials.
%\begin{definition}
%Any set of $n+2$ points that are divided into two disjoint sets in such a way that the convex hulls of the points taken from each set are intersecting. These two sets are called positive and negative deviation basis points.
%\end{definition}

Note that the division into ``positive'' and ``negative'' basis points does not mean that the deviation sign is positive for ``positive'' basis points and negative for ``negative'' basis points. The actual deviation sign also depends on the sign of $\sigma$ from~(\ref{eq:main_system}). 
%\begin{definition}
%A basis is called non-singular if the removal of any point leads to the disconnection of the sets. 
%\end{definition}
%
%In this study we only consider non-singular basis. We will also show, by providing a counterexample, that an optimal solution may be reached on a singular basis.

%\begin{theorem}
%Basis is non-singular if and only if  any subset of $n+1$ points of this basis forms an  affine independent system. 
%\end{theorem}
%\begin{proof}
%
%\end{proof}

Extending the notion of Chebyshev interpolating polynomial to the case of multivariate approximation and not restricting ourselves to polynomials, define the following. 
\begin{definition}
A modelling function $L(\A,\x)$ from~(\ref{eq:model_function}) that deviates at the basis points by the same absolute value from  its approximation function and the deviation signs are opposite for any two points if they are selected from different basis subsets (positive or negative) is called Chebyshev interpolation modelling function.
\end{definition}

The additional requirement for a basis to be non-singular may be removed by
\begin{itemize}
\item restricting to some particular types of basis functions (for example, polynomials);
\item allowing the system~(\ref{eq:main_system}) to have more than one solution.
\end{itemize} 
These will be included in our future research directions.
\subsubsection{Step two extension}

Our next step is to demonstrate 
\begin{theorem}
Consider two intersecting sets \(\mathcal{Y}\) and~\(\mathcal{Z}\) such that the points in \(\mathcal{Y}\) all have the same deviation and opposite deviation to all the points in \(\mathcal{Z}\) (\(g(\tilde{y}) = -g(\tilde{z}), \forall \tilde{y}\in \mathcal{Y}, \tilde{z}\in\mathcal{Z}\)). Assume now that $g(\y) = g(\tilde{y}), \forall \tilde{y} \in \mathcal{Y}$, and that the set 
$$\mathcal{K}=\textrm{relint}(\{{\cal{Y}}\cup g(\y)\})\cap\textrm{relint}({\cal{Z}}) \neq \emptyset.$$ There exists a point in the combined collection of vertices of~${\cal{Y}}$ and ${\cal{Z}}$, that can be removed while~$\y$ is included in~${\cal{Y}}$, such that the updated sets~${\cal{\tilde{Y}}}$ and ${\cal{\tilde{Z}}}$ intersect. %and so their relative interiors.  
\end{theorem}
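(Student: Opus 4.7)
The plan is to combine two convex-combination certificates for the intersection of the convex hulls, then pivot along a one-parameter family until exactly one of the original coefficients vanishes. This is the same Carath\'eodory-style reduction used in linear-programming basis updates.

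First, apply the lemma on relative interiors of convex hulls to the hypothesis $\mathcal{K}\neq\emptyset$: pick any $p\in\mathcal{K}$ and obtain strictly positive weights $\alpha_0,\alpha_1,\dots,\alpha_{N_+}$ summing to $1$ together with strictly positive weights $\beta_1,\dots,\beta_{N_-}$ summing to $1$ with
\[
  p \;=\; \alpha_0\,g(\y)+\sum_{i=1}^{N_+}\alpha_i\,g(\mathbf{y}_i) \;=\; \sum_{j=1}^{N_-}\beta_j\,g(\mathbf{z}_j).
\]
Because $\mathcal{Y}$ and $\mathcal{Z}$ themselves intersect by hypothesis, fix any $q\in\co(\mathcal{Y})\cap\co(\mathcal{Z})$ with nonnegative (not necessarily positive) representations $q=\sum_i\alpha_i'\,g(\mathbf{y}_i)=\sum_j\beta_j'\,g(\mathbf{z}_j)$, where $\sum_i\alpha_i'=\sum_j\beta_j'=1$.

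Second, parameterise the two certificates: for $t\in\R$ set $\alpha_0(t)=t\alpha_0$, $\alpha_i(t)=(1-t)\alpha_i'+t\alpha_i$, and $\beta_j(t)=(1-t)\beta_j'+t\beta_j$. A direct computation shows that $\alpha_0(t)+\sum_i\alpha_i(t)=1$ and $\sum_j\beta_j(t)=1$ identically in $t$, and that the equation
\[
  \alpha_0(t)\,g(\y)+\sum_i\alpha_i(t)\,g(\mathbf{y}_i) \;=\; \sum_j\beta_j(t)\,g(\mathbf{z}_j)
\]
persists for every $t$. At $t=1$ all coefficients are strictly positive; let $T\ge 1$ be the largest value for which every $\alpha_i(t),\beta_j(t)$ remains nonnegative. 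The key counting observation is that $T$ is finite: since $\alpha_0>0$ one has $\sum_i\alpha_i=1-\alpha_0<1=\sum_i\alpha_i'$, so some $\alpha_{i_0}<\alpha_{i_0}'$, forcing the affine map $\alpha_{i_0}(t)$ to decrease strictly to $-\infty$. Hence at $t=T$ some coefficient $\alpha_{i_0}(T)$ or $\beta_{j_0}(T)$ is exactly $0$, while $\alpha_0(T)=T\alpha_0>0$ keeps $\y$ in the combination. Discarding that offending vertex from $\mathcal{Y}$ or $\mathcal{Z}$ and adjoining $\y$ to $\mathcal{Y}$ yields the desired updated sets $\tilde{\mathcal{Y}},\tilde{\mathcal{Z}}$ whose convex hulls meet at the $t=T$ common value.

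The step I expect to be the main obstacle is notational rather than geometric. The statement overloads the symbol $g$ (used here as the deviation sign, yet in section~\ref{sec:convexObjective} as the vector embedding $\x\mapsto(1,g_1(\x),\dots,g_n(\x))^T$) and writes $\textrm{relint}$ of a finite set of points where the relative interior of its convex hull is meant; once these conventions are nailed down, the pivot above is routine. A small point worth confirming is that the removed vertex genuinely lies in the original $\mathcal{Y}\cup\mathcal{Z}$ rather than being $\y$ itself, which is precisely the content of the strict inequality $\alpha_0(T)>0$.
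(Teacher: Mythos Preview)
Your argument is correct and is, at heart, the same pivot as the paper's: take the two convex-combination certificates (one coming from $\mathcal{K}\neq\emptyset$, the other from $\co(\mathcal{Y})\cap\co(\mathcal{Z})\neq\emptyset$), move along the line they span in coefficient space, and stop at the first point where a coefficient of an old vertex vanishes while the coefficient on $\y$ stays positive.

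The only real difference is the parameterisation of that line. The paper forms $(\text{new certificate})-\gamma\,(\text{old certificate})$ with $\gamma=\min\{\tilde\alpha_i/\alpha_i,\tilde\beta_j/\beta_j\}$ and then renormalises by $1-\gamma$; you take the affine combination $t\,(\text{new})+(1-t)\,(\text{old})$ and push $t$ past $1$. Your version has two small advantages: the coefficient sums stay equal to $1$ automatically, so no renormalisation is needed, and you use only nonnegative weights $\alpha_i',\beta_j'$ for the old certificate rather than strictly positive ones, so you do not need the extra hypothesis $\textrm{relint}(\mathcal{Y})\cap\textrm{relint}(\mathcal{Z})\neq\emptyset$ that the paper's proof invokes in its first line (and without which the ratios $\tilde\alpha_i/\alpha_i$ would be ill-defined). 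Your remarks about the overloaded symbol $g$ and the informal use of $\textrm{relint}$ on a finite point set are also accurate; the paper uses these exactly as you guessed.
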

\begin{proof}
Since ${\textrm{relint}}({\cal{Y}})\cap\text{relint}({\cal{Z}})\ne \emptyset$, there exist strictly positive coefficients 
$$\alpha_i,~ i=1,\dots,N_+$$ and  $$\beta_i,~j=1,\dots,N_-,$$ such that $\sum_{i=1}^{N_+}\alpha_i=1$ and $\sum_{j=1}^{N_-}\beta_j=1$.

Since 
$\mathcal{K}\ne\emptyset$ there exist strictly positive coefficients $$\alpha,~\tilde{\alpha}_i,~i=1,\dots,N_+$$ such that $\alpha+\sum_{i=1}^{N_+}\tilde{\alpha}_i=1$ and  $\tilde{\beta}_i$, $j=1,\dots,N_-$, such that $$\sum_{j=1}^{N_-}\tilde{\beta}_j=1.$$ 
%Indeed, if one of the $\alpha_i$, $i=1,\dots,N_+$ or $\beta_i$, $j=1,\dots,N_-$ is zero, then the corresponding vector can be removed (and the new point included) while the corresponding sets are intersecting.

%Set $\cal{K}$ is convex (as it is an intersection of two convex sets). 

%Assume that $\alpha>0$. 
%Otherwise one can consider a segment, formed by the new point and the intersection point.  On this segment, one can choose a point (other then the intersection original point) that is also an intersection point and the corresponding convex coefficient is strictly positive.

Find
\begin{equation}\label{eq:gamma}
\gamma=\min\left\{\min_{i=1,\dots,N_+}{\tilde{\alpha}_i\over \alpha_i},\min_{j=1,\dots,N_-}{\tilde{\beta}_j\over \beta_j}\right\}.
\end{equation}

First, assume that $\gamma={\tilde{\alpha}_1\over\alpha_1}$. Note that $\alpha_1\ne 0$,
%, otherwise $\y$ can be included instead of $\y_1$,
then~(\ref{eq:intersecting}) can be written as
$$\y_1={1\over\alpha_1}\left(\sum_{j=1}^{N_-}\beta_jg(\z_j)-\sum_{i=2}^{N_+}\alpha_i g(\y_i)\right).$$
Then, the convex hull with the new point $\y$ is
\begin{equation}
\alpha g(\y)+{\tilde{\alpha}_1\over\alpha_1}\left(\sum_{j=1}^{N_-}\beta_jg(\z_j)-\sum_{i=2}^{N_+}\alpha_ig(\y_i)\right)+\sum_{i=2}^{N_+}\tilde{\alpha_i g(\y_i)}=\sum_{j=1}^{N_-}\tilde{\beta}_jg(\z_j)
\end{equation}
and finally
\begin{equation}
\alpha g(\y)+\sum_{i=2}^{N_+}(\tilde{\alpha}_i-{\tilde{\alpha}_i\over \alpha_i})g(\y_i)=\sum_{j=1}^{N_-}(\tilde{\beta}_j-{\tilde{\alpha}_1\over \alpha_1})g(\z_j).
\end{equation}
Since $\alpha_i>0$, $i=1,\dots,N_+$ and the definition of $\gamma$, one can obtain that for any $i=1,\dots,N_+$
\begin{equation}
\tilde{\alpha}_i-{\tilde{\alpha}_1\over \alpha_1}\geq \tilde{\alpha}_i-{\tilde{\alpha}_i\over \alpha_i}=0.
\end{equation}
Similarly, for any $j=1,\dots,N_-$, $$\tilde{\beta}_j-{\tilde{\alpha}_1\over\alpha_1}\beta_j\geq 0.$$
Note that
\begin{equation}
\sum_{j=1}^{N_-}(\tilde{\beta}_j-{\tilde{\alpha}_1\over \alpha_1}\beta_j)=1-{\tilde{\alpha}_1\over \alpha_1}
\end{equation}
and 
\begin{equation}
\alpha+\sum_{i=2}^{N_+}\tilde{\alpha}_i-{\tilde{\alpha}_1\over\alpha_1}\sum_{i=2}^{N_+}\alpha_i=\alpha+(1-\alpha\tilde{\alpha}_1)-{\tilde{\alpha}_1\over\alpha_1}=1-{\tilde{\alpha}_1\over\alpha_1}=1-\gamma.
\end{equation}
Since $\alpha$ is strictly positive, $\gamma<1$. Therefore, the new point can be included instead of $\y_1$ and the convex hulls of the updated sets are intersecting (and so their relevant interiors).

Second, assume that $\gamma={\tilde{\beta}_1\over\beta_1}$. Note that $\beta_1\ne 0$, otherwise $\y$ can be included instead of $\z_1$.

Similarly to part~1, obtain
\begin{equation}
\alpha g(\y)+\sum_{i=1}^{N_+}(\tilde{\alpha}_i-{\tilde{\beta}_1\over\beta_1}\alpha_i)g(\y_i)=\sum_{j=2}^{N_-}(\tilde{\beta}_j-{\tilde{\beta}_1\over\beta_1}\beta_j)g(\z_j).
\end{equation} 
Since
$$\alpha+1-\alpha-{\tilde{\beta}_1\over\beta_1}=1-\tilde{\beta}_1-{\tilde{\beta}_1\over\beta_1}(1-\beta_1)=1-{\tilde{\beta}_1\over\beta_1}>0,$$
the convex hulls of the updated sets are intersecting.
\end{proof}

Note that for the extension of this step we only need the assumption that the relative interiors are intersecting, moreover, if this is the case, the new basis preserves this property.

\subsubsection{Step three extension}

The final step is to show that the proposed exchange rule leads to a modelling function whose deviation at the new basis is strictly higher than the deviation at the points of the original basis.

\begin{theorem}
Assume that a point with a higher absolute deviation is included in the basis instead of one of the points of the original basis (which is also non-singular). The absolute deviation of the Chebyshev interpolation modelling function that corresponds to the new basis is higher than the one of the Chebyshev interpolation modelling function on the original basis.
\end{theorem}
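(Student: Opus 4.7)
The plan is to derive an explicit formula for $\tilde\sigma-\sigma$, where $\sigma$ and $\tilde\sigma$ are the signed deviations of the Chebyshev interpolation modelling functions $L$ and $\tilde L$ on the original and new bases, and to show that this difference is a strictly positive multiple of $\delta-\sigma$, where $\delta:=|f(\y)-L(\y)|>\sigma$ is the (higher) deviation of the newly inserted point. This is the direct multivariate analogue of the classical one-line de la Vall\'{e}e-Poussin estimate.

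I first fix notation. Let $\sigma$ denote the scalar solving the linear system~(\ref{eq:main_system}) on the original basis, so that $f(\y_i)-L(\y_i)=\sigma$ on $\mathcal{Y}$ and $f(\z_j)-L(\z_j)=-\sigma$ on $\mathcal{Z}$, and assume $\sigma>0$ without loss of generality. Denote by $\tilde L$, $\tilde\sigma$, $\tilde{\mathcal{Y}}$, $\tilde{\mathcal{Z}}$ the analogous objects on the new basis. By the previous theorem the new point $\y$ is inserted into whichever side matches the sign of $f(\y)-L(\y)$; after relabelling, assume $\y\in\tilde{\mathcal{Y}}$ with $f(\y)-L(\y)=+\delta$. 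Crucially, each retained point keeps its original sign role.

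Next I exploit non-singularity of the new basis. Setting $g(\x):=(1,g_1(\x),\ldots,g_n(\x))^T$, the intersection of the relative interiors supplies strictly positive convex coefficients $\{\alpha_i\}$ on $\tilde{\mathcal{Y}}$ and $\{\beta_j\}$ on $\tilde{\mathcal{Z}}$ with $\sum_i\alpha_i\,g(\tilde\y_i)=\sum_j\beta_j\,g(\tilde\z_j)$. Because $L$ and $\tilde L$ are both affine in $g$, this lifts to $\sum_i\alpha_iL(\tilde\y_i)=\sum_j\beta_jL(\tilde\z_j)$, and likewise for $\tilde L$. Subtracting each of these from the common value $\sum_i\alpha_if(\tilde\y_i)-\sum_j\beta_jf(\tilde\z_j)$ gives
\[
\sum_i\alpha_i[f(\tilde\y_i)-L(\tilde\y_i)]-\sum_j\beta_j[f(\tilde\z_j)-L(\tilde\z_j)]=\sum_i\alpha_i[f(\tilde\y_i)-\tilde L(\tilde\y_i)]-\sum_j\beta_j[f(\tilde\z_j)-\tilde L(\tilde\z_j)].
\]

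Evaluating both sides completes the proof. The right-hand side equals $2\tilde\sigma$ directly from the Chebyshev interpolation property of $\tilde L$. On the left, retained $\tilde\y_i$ contribute $+\sigma$, the new vertex $\y$ contributes $+\delta$, and retained $\tilde\z_j$ contribute $-\sigma$, so after grouping the left-hand side simplifies to $2\sigma+\alpha_{\y}(\delta-\sigma)$, where $\alpha_{\y}>0$ is the coefficient of $\y$. Equating yields the explicit formula
\[
\tilde\sigma-\sigma=\frac{\alpha_{\y}(\delta-\sigma)}{2}>0,
\]
whence $|\tilde\sigma|=\tilde\sigma>\sigma=|\sigma|$; the mirror case $\y\in\tilde{\mathcal{Z}}$ is symmetric. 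The only real obstacle I anticipate is sign bookkeeping: one must verify that the exchange rule of the previous theorem places $\y$ into the set whose deviation sign agrees with $f(\y)-L(\y)$, so that no retained basis point secretly swaps sign between the old and new bases. Once this is pinned down, the identity afforded by the strictly interior intersection of the two convex hulls makes the remainder automatic.
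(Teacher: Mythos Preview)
Your argument is correct and follows essentially the same route as the paper: both exploit the convex combination coefficients on the \emph{new} basis to note that $\sum_i\alpha_i L(\tilde\y_i)-\sum_j\beta_j L(\tilde\z_j)=0$ for any affine $L$, and then evaluate $\sum_i\alpha_i(f-L)(\tilde\y_i)-\sum_j\beta_j(f-L)(\tilde\z_j)$ once with $L=\tilde L$ (yielding $2\tilde\sigma$) and once with $L$ the old interpolant (yielding $2\sigma+\alpha_{\y}(\delta-\sigma)$). The only difference is cosmetic: you extract the explicit identity $\tilde\sigma-\sigma=\tfrac{1}{2}\alpha_{\y}(\delta-\sigma)$, whereas the paper stops at the strict inequality $2\sigma_{new}>2\sigma_o$.
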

\begin{proof}
Denote by \[\mathcal{Y} = \{\y_i,~i=1,\dots,N_+\}\] and \[\mathcal{Z} = \{\z_j,~j=1,\dots,N_-\}\] respectively. Assume that \(\tilde{\mathcal{Y}} = \mathcal{Y}\cup \{y\}\setminus \{y_1\}\) and \(\tilde{Z} = \mathcal{Z}\) (when the a point from the set \(\mathcal{Z}\) is removed instead, the proof is similar.)

Since the convex hulls of positive and negative deviation points are intersecting,   there exist nonnegative convex coefficients 
\begin{itemize}
\item $\alpha_1,\dots,\alpha_{N_+}: \sum_{i=1}^{N_+}\alpha_i=1$ and $\beta_1,\dots,\beta_{N_-}: \sum_{j=1}^{N_-}\beta_j=1$ (original basis);
\item $\alpha,~\tilde{\alpha}_2,\dots,\tilde{\alpha}_{N_+}: \alpha+\sum_{i=2}^{N_+}\tilde{\alpha}_i=1$ and $\beta_1,\dots,\beta_{N_-}: \sum_{j=1}^{N_-}\beta_j=1$ (new basis), 
\end{itemize}
such that on the original basis
\begin{equation}\label{eq:convex_hulls_new_basis}
\sum_{i=1}^{N_+}\alpha_i\y_i-\sum_{j=1}^{N_-}\beta_j\z_j=\mathbf{0}
\end{equation}
and on the new basis
\begin{equation}\label{eq:convex_hulls_original_basis}
\alpha\y+\sum_{i=2}^{N_+}\tilde{\alpha}_i\y_i-\sum_{j=1}^{N_-}\tilde{\beta}_j\z_j=\mathbf{0}
\end{equation}
Systems~(\ref{eq:convex_hulls_new_basis}) is equivalent to 
%\begin{equation}
%\left[\alpha_1,\dots,\alpha_{N_+},\beta_1,\dots,\beta_{N_-}\right]\left[\begin{matrix}{c}
%\y_1\\
%\vdots\\
%\y_{N_+}\\
%\z_1\\
%\vdots\\
%\z_{N_-}
%\end{matrix}
%\right]=\mathbf{0}
%\end{equation}
%and 
\begin{equation}
\left[\alpha,\tilde{\alpha}_2,\dots,\tilde{\alpha}_{N_+},\tilde{\beta}_1,\dots,\tilde{\beta}_{N_-}\right]\left[\begin{matrix}
\y\\
\y_2\\
\vdots\\
\y_{N_+}\\
\z_1\\
\vdots\\
\z_{N_-}
\end{matrix}
\right]=\mathbf{0}.
\end{equation}

Then  
%\begin{equation}
%\left[\alpha_1,\dots,\alpha_{N_+},\beta_1,\dots,\beta_{N_-}\right]\left[\begin{matrix}{cc}
%1&\y_1\\
%\vdots&\vdots\\
%1&\y_{N_+}\\
%1&\z_1\\
%\vdots&\vdots\\
%1&\z_{N_-}
%\end{matrix}
%\right]\A=\mathbf{0}
%\end{equation}
%and 
\begin{equation}
\left[\alpha,\tilde{\alpha}_2,\dots,\tilde{\alpha}_{N_+},\tilde{\beta}_1,\dots,\tilde{\beta}_{N_-}\right]\left[\begin{matrix}
1&\y\\
1&\y_2\\
\vdots&\vdots\\
1&\y_{N_+}\\
1&\z_1\\
\vdots&\vdots\\
1&\z_{N_-}
\end{matrix}
\right]\A=\mathbf{0}
\end{equation}
for any $\A\in\R^{n+1}$.
Let $\A_{o}$ and $\A_{new}$ be parameter coefficients of the Chebyshev interpolation modelling functions that correspond to the original and new basis respectively. Then 
\begin{equation}\label{eq:orig_cheb_inter_pol}
\alpha P_n(\A_{o},\y)+\sum_{i=2}^{N_+}\tilde{\alpha}_iP_n(\A_{o},\y_i)-\sum_{j=1}^{N_-}\tilde{\beta}_jP_n(\A_{o},\z_j)=0
\end{equation} 
and 
\begin{equation}\label{eq:new_cheb_inter_pol}
\alpha P_n(\A_{new},\y)+\sum_{i=2}^{N_+}\tilde{\alpha}_iP_n(\A_{new},\y_i)-\sum_{j=1}^{N_-}\tilde{\beta}_jP_n(\A_{new},\z_j)=0.
\end{equation} 
Assume that 
\begin{equation}
f(\y_1)-P_n(\A_{new},\y_1)=\sigma_{new}>0.
\end{equation}
Then 
\begin{equation}
\sigma_{new}+P_n(\A_{new},\y)=f(\y),
\end{equation}
\begin{equation}
\sigma_{new}+P_n(\A_{new},\y_i)=f(\y_i),~i=2,\dots,N_+,
\end{equation}
and
\begin{equation}
-\sigma_{new}+P_n(\A_{new},\z_j)=f(\z_j),~j=2,\dots,N_-.
\end{equation}
Due to~(\ref{eq:orig_cheb_inter_pol})-(\ref{eq:new_cheb_inter_pol})
%\begin{equation}
\begin{align*}
2\sigma_{new}=&\\
=&\alpha(f(\y)-P_n(\A_{o},\y)+\sum_{i=2}^{N_+}\tilde{\alpha_i}(f(\y_i)-P_n(\A_{o},\y_i))-\sum_{j=1}^{N_-}\tilde{\beta_j}(f(\z_j)-P_n(\A_{o},\z_i))\\
  >&2\sigma_{o}.\\
  \end{align*}
  Therefore, $\sigma_{new}>\sigma_{o}.$
%\end{equation}
\end{proof}

Therefore, the notion of basis and de la Vall\'{e}e-Poussin procedure is extended to multidimensional functions. Also, it has been extended to any basis functions (not only traditional polynomials). If the newly obtained basis is non-singular, one can make another de la Vall\'{e}e-Poussin procedure step.

\section{Further research directions}\label{sec:conclusion}

We will extend the results to the case when the basis is singular. In order to do this, we need to remove two assumptions.
\begin{enumerate}
\item Any $(n+1)$ point subset of the basis ($n+2$ points) form an affine independent system.
\item Relative interiors of the convex hulls of positive and negative maximal deviation points (restricted to basis) are intersecting. 
\end{enumerate}

The first assumption may not be removed for an arbitrary type of basis function. However, it may be possible to remove this assumption for some special types of functions (for example, polynomials). The removal of the second assumption may lead to dimension reduction.  These will be included in our future research directions.

\section{Acknowledgements}

This paper was inspired by the discussions during a recent
MATRIX program ``Approximation and Optimisation\rq{}\rq{}  that took place in July 2016. We are thankful to
the MATRIX organisers,  support team and participants for a terrific research atmosphere and productive discussions.
%\end{acknowledgements}

% The following lemma holds.
%
%\begin{lemma}\label{lem:monomial}
%Consider two sets of non-negative coefficients 
%\begin{itemize}
%\item $\alpha_i\geq ,~i=1,\dots,n$ such that $\sum_{i=1}^{n}\alpha_i=1$;
%\item $\beta_i\geq ,~i=1,\dots,n$ such that $\sum_{i=1}^{n}\beta_i=1$.
%\end{itemize}
%If 
%\begin{equation}\label{eq:lem_1}
%\sum_{i=1}^{n}\alpha_ia_ix_i=\sum_{i=1}^{n}\beta_ib_iy_i
%\end{equation}
%and \begin{equation}\label{eq:lem_2}
%\sum_{i=1}^{n}\alpha_ia_i=\sum_{i=1}^{n}\beta_ib_i
%\end{equation}
%then for any scalar $\delta$ the following equality holds
%\begin{equation}\label{eq:lem_3}
%\sum_{i=1}^{n}\alpha_ia_i(x_i-\delta)=\sum_{i=1}^{n}\beta_ib_i(y_i-\delta).
%\end{equation}
%\end{lemma}
%\proof
%\begin{align*}
%\sum_{i=1}^{n}\alpha_ia_i(x_i-\delta)=&\sum_{i=1}^{n}\alpha_ia_ix_i-\delta\sum_{i=1}^{n}\alpha_ia_i\\
%  =&\sum_{i=1}^{n}\beta_ib_i-\delta\sum_{i=1}^{n}\beta_ib_iy_i\\
%  =&\sum_{i=1}^{n}\beta_ib_i(y_i-\delta).
%\end{align*}
%$\hskip 300pt \triangle$
% 
%\section{Univariate case procedure}\label{seq:univariate}
%\section{Multivariate case procedure}\label{seq:multivariate}

    \bibliographystyle{amsplain}

%    \bibliography{julien}

\end{document}